\begin{document}
\title{On The Robustness of Epsilon Skew Extension for Burr III Distribution on Real Line
\footnote{The opinions expressed in this text are those of the authors and do not necessarily reflect the views of any organization.}}
\renewcommand{\titleheading}
             {Using the ``revstat.sty" Package}  
\author{\authoraddress{Mehmet Niyazi \c{C}ankaya}
                      {Department of Statistics, Faculty of Arts and Science
                       U\c{s}ak University,\\
                       U\c{s}ak
                       \ (mehmet.cankaya@usak.edu.tr)}
\\
        \authoraddress{Abdullah Yal\c{c}{\i}nkaya}
                      {Department of Statistics, Faculty of  Science
                       Ankara University,\\
                        \ (ayalcinkaya@ankara.edu.tr)}
                       \\
        \authoraddress{\"{O}mer Alt{\i}nda\v{g}}
                      {Department of Statistics, Faculty of Arts and Science
                       B\.{i}lec\.{i}k \c{S}eyh Edebal\.{i} University,\\
                         \ (ayalcinkaya@ankara.edu.tr)}
                       \\
        \authoraddress{Olcay Arslan}
                      {Department of Statistics, Faculty of  Science
                      Ankara University ,\\                     
                       \ (oarslan@ankara.edu.tr)}
}
\renewcommand{\authorheading}
             {Mehmet Niyazi \c{C}ankaya \ and \ Abdullah Yal\c{c}{\i}nkaya \ and \ \"{O}mer Alt{\i}nda\v{g} \ and \ Olcay Arslan }  

\maketitle

\begin{abstract}
The Burr III distribution is used in a wide variety of fields of lifetime data analysis, reliability theory, and financial literature, etc.
It is defined on the positive axis and has two shape parameters, say $c$ and $k$. These shape parameters make the distribution quite flexible.
They also control the tail behavior of the distribution. In this study, we extent the Burr III distribution to the real axis and also add
a skewness parameter, say $\varepsilon$, with epsilon-skew extension approach.  When the parameters $c$ and $k$ have a relation such that
$ck \approx 1 $ or $ck < 1 $, it is skewed unimodal. Otherwise, it is skewed bimodal with the same level of peaks on the negative and positive sides of real line.
Thus, ESBIII distribution can capture fitting the various data sets even when the number of parameters are three. Location and scale form of this distribution are also given.
Some distributional properties of the new distribution are investigated. The maximum likelihood (ML) estimation method for the parameters
of ESBIII is considered. The robustness properties of ML estimators are studied and also tail behaviour of ESBIII distribution is examined.
The applications on real data are considered to illustrate the modeling capacity of this distribution
in the class of bimodal distributions.
\end{abstract}

\begin{keywords}
Asymmetry; Burr III distribution; bimodality; epsilon skew; robustness; unimodality.
\end{keywords}

\begin{ams}
49A05, 78B26.
\end{ams}

\mainpaper  

\section{Introduction}

It is widely accepted that using flexible and heavy-tailed distributions is important to fit the data sets for practitioners in
application science. \cite{Burr42} proposed twelve different distributions which are quite flexible and useful to model the data sets.
Especially, Burr XII distribution is generally used as a flexible distribution \cite{Burr42,Burr68,Burr73,Rodriquez77,ali2015pak,Wingo83}.
However, Burr III distribution has a wider skewness and kurtosis region than that of Burr XII distribution \cite{Tadikamalla80}.
Burr III distribution has two shape parameters $c$ and $k$ which give the flexibility to the distribution.
Additionally, it has an ability that produces L-shaped or unimodal distribution when $ck \leq 1$ or $ck>1$
are satisfied, respectively \cite{Lindsay96}. Burr III distribution is defined on the positive axis,
so it is widely used in reliability studies \cite{wingo83,wang96,zimmer98,Zoraghietal12}, in the forest study
\cite{Goveetal08,Lindsay96} and in the hydrology study  \cite{hao2009entropy}.
It is also used in the quality and control engineering, reliability analysis  \cite{Shaoetal08,abd2012bayesian,azimi2013bayesian}.


In real life, most of the data sets are generally measured on the real axis and they may also have asymmetry as well.
There are well known distributions in the literature such as normal, student $t$, Laplace.
These are all symmetric distributions. Recently, many researchers have dealt with generalizing
the existing distributions to capture the data sets having asymmetry. There are various generalization
techniques which extent a symmetric distribution to asymmetric form. \cite{Azzalini85} proposed an
approach to construct skewed distribution and introduced firstly skew normal (SN) distribution.
Also \cite{AzzaliniCapi03} presented skew $t$ distribution. One can refer to \cite{Gentonbookskew04}
for Azzalini type skew distributions. \cite{Mud00} introduced so-called epsilon-skew method to
generalize the symmetric distributions to asymmetric form with a skewness parameter, say $\varepsilon$
such that $|\varepsilon|<1$. In that study, they presented the epsilon-skew normal (ESN) distribution.
If the skewness parameter $\varepsilon$ is chosen as zero, it reduces to the normal distribution.
From now on, this method has been applied to several well known distributions. \cite{Elsalloukhetal05}
considered the epsilon-skew extension of the exponential power (ESEP) distribution.
Special cases of the ESEP distribution reduce the ESN distribution \cite{Mud00} and epsilon-skew Laplace
(ESL) distribution \cite{Elsalloukh08}. The scale variant of ESEP distribution was considered
by \cite{Gomez07,ArslanGenc09}. \cite{Hassan13} took into account epsilon-skew extension of the reflected gamma distribution.
A comprehensive content about asymmetric distributions which include wide family of symmetric distributions as a special case can be found in \cite{ArelGom05}.

It should be noted that all the above distributions are unimodal skewed. However, one can encounter bimodal data sets in real life problems.
So, modeling the bimodality is another challenging problem. For this purpose, one can use mixed distributions.
However, there have been proposed bimodal distributions to model the bimodality directly.  \cite{Hassan14,Andrade16,AndradeRat16,Cankayaetal15,Dexter15,Elal09,Donatella10,Genc13,Gui14,Has10,Hasel16,Rocha13,Cooray13,Jama11,Sham13,Rego12,Elal10,Arellanoetal10,Gomez11,Ahmed08,San08,Fam04,Eu02} proposed the distributions in the class of bimodality.

In this paper, we first reflect the Burr III distribution to real axis and consider its epsilon-skew extension. Anymore, we call it as the epsilon-skew Burr III
(ESBIII) distribution. If the skewness parameter $\varepsilon \neq 0$, it can be unimodal skewed or bimodal skewed with same level of peaks, depending on the shape parameters.
The new distribution is not only unimodal, bimodal and skew but also candidate for efficient modelling when compared with ESEP distribution, because of the two
shape parameters in Burr III distribution. The scale version of ESEP is generalized $t$ distribution proposed by \cite{ArslanGenc09} and it has three parameters,
shape peakedness, tail and skewness parameters. In $ESBIII$, the number of parameters are three, however, it has the shape parameters $c$ and $k$
which help not only shape peakedness and tail behaviour but also the behaviour between peakedness and tail behaviour all together.
The order statistics distributions from ESBIII can be explicitly defined as an another advantage due to having an exact form of cumulative distribution function.
Since the skewed form of these two distributions are in same type, it is said that the the responsibility of parameter $\varepsilon$ are same for these two distributions.
 The robustness property of this distribution is wide when compared with well known Student $t$ and its generalized skew version considered by \cite{ArslanGenc09}.
 We also propose the location-scale transformation of ESBIII distribution.

Remainder of the paper is set up as follows. In Section 2, we present epsilon-skew extension of Burr III distribution. In Section 3,
we study some basic properties of the ESBIII distribution.  In Section 4,
we obtain the maximum likelihood estimators (MLE) for the parameters of the ESBIII distribution.
Section $5$ investigates of the robustness property of MLE. In Section 6, some examples are given to show the performance of the ESBIII. The last section considers the conclusions.

\section{The Epsilon Skew Burr III Distribution}\label{B3definiton}

Consider the random variable $Z$ has a Burr III distribution with the density function
\begin{equation}
g(z)=ck z^{-(c+1)} (1+z^{-c})^{-(k+1)}.
\end{equation}
and the distribution function
\begin{equation}
G(z)=(1+z^{-c})^{-k}.
\end{equation}
where $z>0$ and the shape parameters $c>0$ and $k>0$ are real-valued parameters that identify the characteristic of the distribution such as dispersion,
peakedness, tail thickness. As noted earlier, when $ck \leq 1$ the Burr III distribution is L-shaped, when $ck>1$ the Burr III distribution is unimodal.\\

The following theorem states the epsilon skew extension of the Burr III distribution.
\begin{theorem}\label{B3theorem}
Let $Z$ be a continuous random variable distributed as a Burr III with the parameters $c$ and $k$. Let $U$ be
a discrete random variable with the following values with their probabilities,
\begin{equation}\label{Trv}
U=\left\{
  \begin{array}{ll}
    1+\varepsilon, & \frac{1+\varepsilon}{2} \\
    -(1-\varepsilon), & \frac{1-\varepsilon}{2}
  \end{array}
\right.
\end{equation}
\noindent where $\varepsilon \in (-1,1)$. Assume that $Z$ and $U$ are independent. Then, the distribution of the random variable
\begin{equation}\label{xvariable}
X=Z U
\end{equation}
\noindent  will have the following distribution function
\begin{equation}\label{dfESB3}
    F(x)=\left\{
  \begin{array}{ll}
   \frac{1-\varepsilon}{2}+(\frac{1+\varepsilon}{2})(1+(\frac{x}{1+\varepsilon})^{-c})^{-k} , & x \geq 0 \\
   (\frac{1-\varepsilon}{2})(1-(1+(\frac{-x}{1-\varepsilon})^{-c})^{-k}) , & x<0
  \end{array}
\right.
\end{equation}
and will have the following density function
\begin{equation}\label{pdfESB3}
f(x)=\frac{ck}{2} \bigg(\frac{sign(x)x}{1+sign(x)\varepsilon}\bigg)^{-(c+1)}\bigg(1+\bigg(\frac{sign(x)x}{1+sign(x)\varepsilon}\bigg)^{-c}\bigg)^{-(k+1)} , x \in \mathbf{R\backslash{0}}
\end{equation}
\noindent with the shape parameters $c>0$, $k>0$ and the skewness parameter $\varepsilon \in (-1,1)$.
\end{theorem}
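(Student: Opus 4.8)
The plan is to derive the distribution function $F(x)$ of $X = ZU$ by conditioning on the discrete variable $U$, then differentiate to obtain the density. Since $Z$ and $U$ are independent, I would write $F(x) = P(ZU \le x) = \frac{1+\varepsilon}{2}\,P\big(Z(1+\varepsilon) \le x\big) + \frac{1-\varepsilon}{2}\,P\big(-Z(1-\varepsilon) \le x\big)$, using the two mass points of $U$ from \eqref{Trv}. Because $Z > 0$ almost surely, the event $\{Z(1+\varepsilon) \le x\}$ is empty for $x < 0$ and equals $\{Z \le x/(1+\varepsilon)\}$ for $x \ge 0$; symmetrically, $\{-Z(1-\varepsilon) \le x\}$ is the whole space for $x \ge 0$ and equals $\{Z \ge -x/(1-\varepsilon)\}$ for $x < 0$.

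The main steps are then: first, for $x \ge 0$, substitute $G(z) = (1+z^{-c})^{-k}$ at $z = x/(1+\varepsilon)$ into the first term and note the second term contributes its full weight $\frac{1-\varepsilon}{2}$, giving the top branch of \eqref{dfESB3}; second, for $x < 0$, the first term vanishes and the second becomes $\frac{1-\varepsilon}{2}\big(1 - G(-x/(1-\varepsilon))\big)$, which is the bottom branch. One should check continuity at $x = 0$: both branches tend to $\frac{1-\varepsilon}{2}$ as $x \downarrow 0$ and $x \uparrow 0$ (since $G(0^+) = 0$ for $c,k>0$ because $z^{-c} \to \infty$), so $F$ is continuous there, and one verifies the limits $F(-\infty) = 0$, $F(+\infty) = 1$. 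Then I would differentiate each branch: for $x \ge 0$, $\frac{d}{dx}F(x) = \frac{1+\varepsilon}{2}\cdot\frac{1}{1+\varepsilon}\,g\big(x/(1+\varepsilon)\big)$ with $g$ the Burr III density, which simplifies to $\frac{ck}{2}\big(\frac{x}{1+\varepsilon}\big)^{-(c+1)}\big(1+(\frac{x}{1+\varepsilon})^{-c}\big)^{-(k+1)}$; for $x<0$ a parallel computation with the chain rule (the extra minus sign from $-x$ cancels the minus sign in $\frac{d}{dx}(1-\dots)$) gives $\frac{ck}{2}\big(\frac{-x}{1-\varepsilon}\big)^{-(c+1)}\big(1+(\frac{-x}{1-\varepsilon})^{-c}\big)^{-(k+1)}$. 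Unifying the two cases via $\mathrm{sign}(x)$, so that $\frac{x}{1+\varepsilon}$ and $\frac{-x}{1-\varepsilon}$ both become $\frac{\mathrm{sign}(x)\,x}{1+\mathrm{sign}(x)\varepsilon}$, yields the compact form \eqref{pdfESB3}.

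The only mildly delicate point — the "hard part," such as it is — is bookkeeping the sign conventions: making sure the direction of the inequalities flips correctly when multiplying by the negative value $-(1-\varepsilon)$, and checking that the derivative of the $x<0$ branch comes out positive (as a density must) after the two sign reversals cancel. It is also worth remarking briefly why $F$ has no atom at $0$ despite $U$ being discrete: $P(X = 0) = P(Z = 0) = 0$ since $Z$ is continuous. I would close by noting that integrating \eqref{pdfESB3} over $\mathbf{R}\backslash\{0\}$ returns $1$ as a consistency check, which is immediate from the construction since $F$ is a genuine distribution function, and that setting $\varepsilon = 0$ recovers the symmetric reflected Burr III on the real line.
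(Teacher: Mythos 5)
Your proposal is correct and follows essentially the same route as the paper's proof: condition on the two mass points of $U$ to get the piecewise CDF via the distribution-function technique, then differentiate each branch and unify with $\mathrm{sign}(x)$. The paper states the two conditional expressions for $F_X$ directly and differentiates; your version merely spells out the intermediate bookkeeping (empty/full events, continuity at $0$, sign cancellations) in more detail.
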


\begin{proof}
Using the distribution function technique, the CDF of $X$ for $x<0$ is
\begin{equation}\label{p1esb3}
    F_{X}(x)=P(ZU\leq x)=\frac{1-\varepsilon}{2}\bigg\{1-F_{Z}\bigg(\frac{-x}{1-\varepsilon}\bigg)\bigg\}
\end{equation}
and for $x \geq 0$ is
\begin{equation}\label{p1esb3}
    F_{X}(x)=P(ZU\leq x)=\frac{1-\varepsilon}{2}+\frac{1+\varepsilon}{2}\bigg\{F_{Z}\bigg(\frac{x}{1+\varepsilon}\bigg)\bigg\}.
\end{equation}
Taking the derivative of equation~\eqref{dfESB3} with respect to $x$ yields the density function of $X$ given in equation~\ref{pdfESB3}.
\qed
\end{proof}

\begin{definition}
The distribution of the random variable $X$ with the density function in equation \ref{pdfESB3} is called a Epsilon Skew Burr III  ($ESBIII$) distribution.
\end{definition}

\begin{definition}
Suppose that $X \sim ESBIII(c,k,\varepsilon)$. Then, the random variable $Y=X\sigma + \mu$, $\sigma>0$, $\mu \in \mathbf{R}$ will have $ESBIII$ distribution
with the following density function ($Y \sim ESBIII(\mu,\sigma,c,k,\varepsilon)$)

\begin{equation}\label{locationscaleESBIII}
f(y)=\frac{ck}{2\sigma} \bigg(\frac{sign(\frac{y-\mu}{\sigma})\frac{y-\mu}{\sigma}}{1+sign(\frac{y-\mu}{\sigma})\varepsilon}\bigg)^{-(c+1)}\bigg(1+\bigg(\frac{sign(\frac{y-\mu}{\sigma})\frac{y-\mu}{\sigma}}{1+sign(\frac{y-\mu}{\sigma})\varepsilon}\bigg)^{-c} \bigg)^{-(k+1)} , y \in \mathbf{R\backslash{0}}.
\end{equation}
\end{definition}

The $ESBIII$ distribution has five parameters; $\mu$, $\sigma$, $\varepsilon$, $c$ and $k$. The parameters $\mu$ and $\sigma$
determine the location and the scale of the distribution, respectively. The parameter $\varepsilon$ controls the skewness
proportion on the left and right hand side of the location. The parameters $c$ and $k$ control the shape of the density together.
It is noted that when $\varepsilon=0$, it is symmetric unimodal or symmetric bimodal.\\


\subsection{On the flexibility of ESBIII distribution}

Choosing special cases of the values of shape parameters $c$ and $k$ gives the flexible probability density functions (PDF).
For example, the shape of ESBIII distribution for some shape values can be similar to not only the epsilon skew Laplace distribution
but also epsilon skew t distribution. In this context, it is considered that the new distribution can model data sets which cannot
be precisely modelled by epsilon skew Laplace and epsilon skew t distributions, as exemplified by section \ref{applications}. The ESBIII has the properties of both leptokurtic
and platikurtic distributions. When $ck \approx 1$ or $ck < 1$, the PDF is unimodal. For $ck \approx 1$ or $ck < 1$ case,
when the difference between $c$  and $k$ is extremely large, the shape of distribution is platikurtic,
when the difference between $c$  and $k$ is extremely small, the shape of distribution is leptokurtic. The distributions
proposed by \cite{mineo2005software} and the references in \cite{Cankayaetal15} are in platikurtic-leptokurtic family as well.
When the values of $ck$ are close to $1$ from left hand side of $1$, the distribution has a peak being a needle. When $ck>1$,
the PDF is bimodal. The new distribution has also a skewness parameter. Thus, the base length of each group in the negative
and positive side of function becomes different. In Figure 1, some graphs are given to understand the behavior of the newly defined function for the different values of parameters.
The first and third columns in Figure $1$ represent the role of skewness parameter $\varepsilon$.
\begin{figure}[!htb]
\advance\leftskip-1.8cm
\includegraphics[width=150mm, scale=.45]{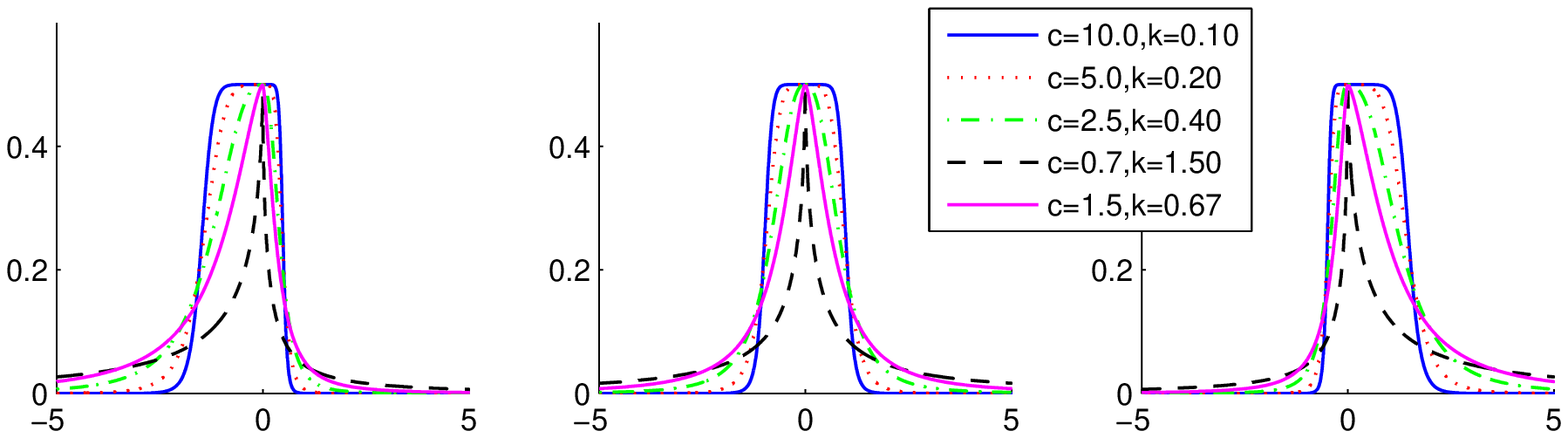}
\includegraphics[width=150mm, scale=.45]{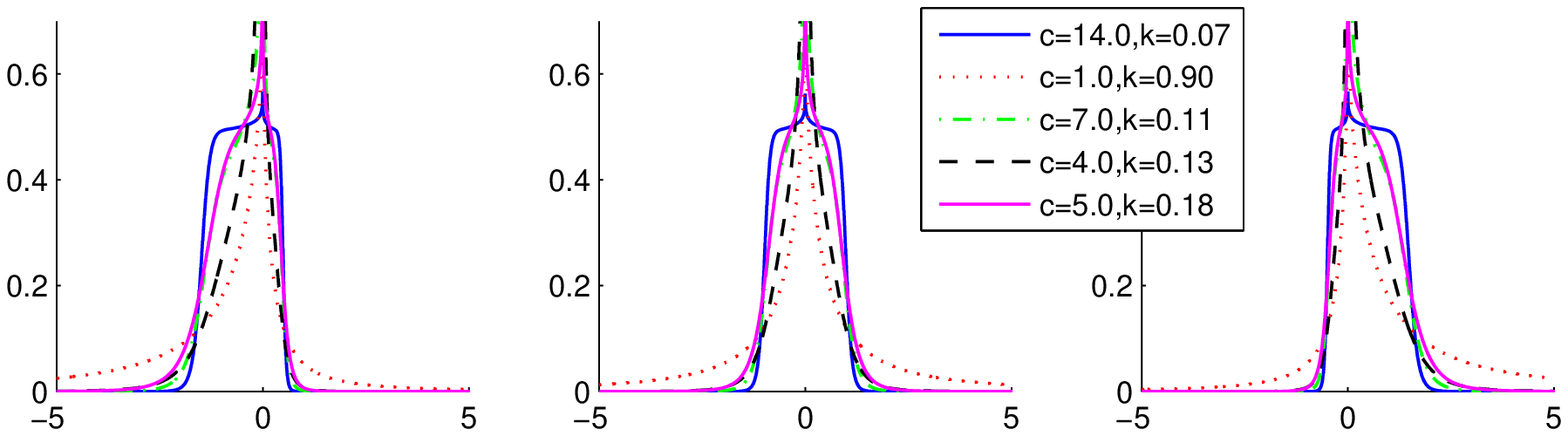}
\includegraphics[width=150mm, scale=.45]{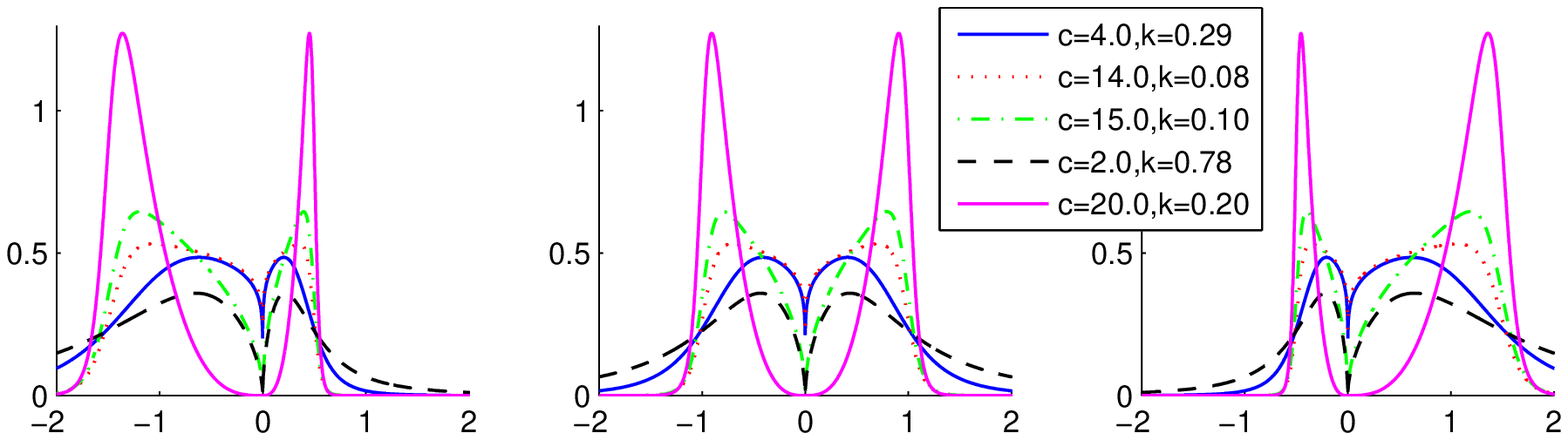}
 \caption{Examples of the PDF of the $ESBIII$ distribution for the different values of parameters, $\varepsilon=-0.5$ (left side), $\varepsilon=0$ (center) and $\varepsilon=0.5$ (right side).}
  \label{fig:pdfexample}
\end{figure}
\begin{table}
\begin{center}
\caption{The skewness and the kurtosis values of $ESBIII$ distribution for the different values of parameters}
\label{tab:1}
\begin{tabular}{p{2.5cm} p{1.3cm} p{1.3cm} p{1.3cm} p{1.3cm} p{1.3cm}}
  \hline
  $c=20, k=0.20$ & $\varepsilon=0$ & $\varepsilon=0.2$ & $\varepsilon=0.4$ & $\varepsilon=0.6$ & $\varepsilon=0.8$  \\
  \hline
  Skewness & 0.0000 & 0.7453 & 1.0945 & 1.1553 & 1.1167 \\
  Kurtosis & 1.1600 & 1.3021 & 1.4448 & 1.4071 & 1.2854 \\
  \hline
  \hline
  $c=14, k=0.07$ & $\varepsilon=0$ & $\varepsilon=0.2$ & $\varepsilon=0.4$ & $\varepsilon=0.6$ & $\varepsilon=0.8$  \\
  \hline
  Skewness & 0.0000 & 0.9398 & 1.3801 & 1.4568 & 1.4082 \\
  Kurtosis & 1.9481 & 2.1866 & 2.4262 & 2.3631 & 2.1587 \\
  \hline
  \hline
  $c=10, k=0.10$ & $\varepsilon=0$ & $\varepsilon=0.2$ & $\varepsilon=0.4$ & $\varepsilon=0.6$ & $\varepsilon=0.8$  \\
  \hline
  Skewness & 0.0000 & 0.9591 & 1.4084 & 1.4866 & 1.4370 \\
  Kurtosis & 2.0701 & 2.3236 & 2.5782 & 2.5111 & 2.2939 \\
  \hline
  \hline
  $c=7, k=0.111$ & $\varepsilon=0$ & $\varepsilon=0.2$ & $\varepsilon=0.4$ & $\varepsilon=0.6$ & $\varepsilon=0.8$  \\
  \hline
  Skewness & 0.0000 & 1.0904 & 1.6013 & 1.6902 & 1.6338 \\
  Kurtosis & 2.8707 & 3.2222 & 3.5752 & 3.4822 & 3.1809 \\
  \hline
  \hline
  $c=5, k=0.20$ & $\varepsilon=0$ & $\varepsilon=0.2$ & $\varepsilon=0.4$ & $\varepsilon=0.6$ & $\varepsilon=0.8$  \\
  \hline
  Skewness & 0.0000 & 1.1760 & 1.7271 & 1.8230 & 1.7622 \\
  Kurtosis & 4.2853 & 4.8100 & 5.3371 & 5.1982 & 4.7485 \\
  \hline
\end{tabular}
\end{center}
\end{table}

In Table $1$, some values of the skewness and kurtosis are given for the different values of parameters. When the distribution
is the positive side skew, the sign of skewness parameter is positive. For negative side skewness, the skewness parameter is negative.
For both case, the kurtosis of distribution is positive. Since the skewness and kurtosis values are greater, it is obvious that the distribution is flexible.

\section{Some Properties of ESBIII Distribution}
The moments of $X \sim ESBIII(c,k,\varepsilon)$ are given by the following theorem.
\begin{theorem}\label{rthmoment}	
Let $X \sim ESBIII(c,k,\varepsilon)$. The $rth$, $r \in \mathbf{R}$, noncentral moments are given by
\begin{equation}
\mathbf{E}(X^r)=\frac{k}{2} \mathbf{B}(1-\frac{r}{c},\frac{r}{c}+k) \{(1+\varepsilon)^{r+1}+(-1)^{-r}(1-\varepsilon)^{r+1}\}, c>r.
\end{equation}
\end{theorem}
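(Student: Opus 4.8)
The plan is to exploit the multiplicative representation $X = ZU$ from Theorem~\ref{B3theorem} together with the independence of $Z$ and $U$, so that the $r$th moment factorises as $\mathbf{E}(X^r) = \mathbf{E}(Z^r)\,\mathbf{E}(U^r)$. The discrete factor is immediate from the distribution of $U$ in~\eqref{Trv}:
\[
\mathbf{E}(U^r) = (1+\varepsilon)^r\cdot\frac{1+\varepsilon}{2} + \big(-(1-\varepsilon)\big)^r\cdot\frac{1-\varepsilon}{2} = \frac12\Big\{(1+\varepsilon)^{r+1} + (-1)^{-r}(1-\varepsilon)^{r+1}\Big\},
\]
which already supplies the bracketed term appearing in the statement (using $(-1)^r = (-1)^{-r}$).

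For the continuous factor I would evaluate $\mathbf{E}(Z^r) = \int_0^\infty z^r\, ck\, z^{-(c+1)}(1+z^{-c})^{-(k+1)}\,dz$ via the change of variable $w = z^{-c}$ (equivalently $t = (1+z^{-c})^{-1}$, which maps $(0,\infty)$ onto $(0,1)$ and leads straight to a Beta integral). Substituting, using $dz = -\tfrac1c w^{-1/c-1}\,dw$, and collecting the powers of $w$, the integral collapses to $k\int_0^\infty w^{-r/c}(1+w)^{-(k+1)}\,dw$. Then the standard identity $\int_0^\infty w^{a-1}(1+w)^{-(a+b)}\,dw = \mathbf{B}(a,b)$ with $a = 1-\tfrac rc$ and $b = k+\tfrac rc$ gives $\mathbf{E}(Z^r) = k\,\mathbf{B}\!\left(1-\tfrac rc,\,\tfrac rc+k\right)$. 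Multiplying the two factors delivers the claimed formula.

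The only points requiring genuine care are bookkeeping rather than ideas: (i) checking that the exponent of $w$ after the substitution is exactly $-r/c$, so that the Beta parameters emerge as stated; and (ii) the convergence condition, since the Beta integral requires $a = 1 - r/c > 0$, i.e. $c > r$ (together with $b = k + r/c > 0$), which is precisely the hypothesis $c > r$ — this is where I expect the main technical attention to go. A brief remark on interpreting $(-1)^{-r}$ for non-integer $r$ (it equals $\cos(\pi r)$ under the usual branch convention, consistent with $(-1)^r$) is worth including but does not affect the derivation.
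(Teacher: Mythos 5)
Your proposal is correct, and it reaches the result by a mildly different route than the paper. The paper works directly with the density of $X$, writes $\mathbf{E}(X^r)$ as an integral over the whole real line, splits it at the origin, and applies the substitutions $x=-(1-\varepsilon)u^{-1/c}$ and $x=(1+\varepsilon)u^{-1/c}$ on the negative and positive half-lines respectively, each piece collapsing to the beta integral $\int_{0}^{\infty}u^{-r/c}(1+u)^{-k-1}du=\mathbf{B}(1-\tfrac{r}{c},\tfrac{r}{c}+k)$. You instead exploit the stochastic representation $X=ZU$ with $Z$ and $U$ independent, so that $\mathbf{E}(X^r)=\mathbf{E}(Z^r)\,\mathbf{E}(U^r)$: the discrete factor produces the bracketed term $\tfrac12\{(1+\varepsilon)^{r+1}+(-1)^{-r}(1-\varepsilon)^{r+1}\}$ with no integration at all, and a single substitution $w=z^{-c}$ handles the continuous factor. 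The two computations are term-for-term equivalent (your two summands in $\mathbf{E}(U^r)$ are exactly the paper's two half-line integrals after its substitutions), and both hinge on the same beta identity with the same convergence condition $c>r$; your version is simply better organized, isolating where the skewness parameter enters and making the factorized structure of the moment transparent. Your side remarks on the convergence condition and on the interpretation of $(-1)^{-r}$ for non-integer $r$ are both apt and go beyond what the paper records.
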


\begin{proof}
The $rth$ moment of $X$ can be easily obtained making the transformation $x=(1+\varepsilon)u^{-1/c}$ for negative part of integral
and the transformation $x=-(1-\varepsilon)u^{-1/c}$ for positive part of integral and using the following beta function
\begin{equation}\label{betafunc}
    \int_{0}^{\infty}u^{-r/c}(1+u)^{-k-1}du=\mathbf{B}(1-\frac{r}{c},\frac{r}{c}+k),
\end{equation}
where $c>r$.
\qed
\end{proof}

\begin{corollary}
Let $X\sim ESBIII(c,k,\varepsilon)$. The expected value of $X$ is
\begin{equation}
   \mathbf{E}(X)=\frac{k}{2} \mathbf{B}(1-\frac{1}{c},\frac{1}{c}+k) 4\varepsilon,~~ c>1.
\end{equation}
\end{corollary}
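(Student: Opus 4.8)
The plan is to obtain this directly as the special case $r=1$ of Theorem~\ref{rthmoment}. Since the hypothesis there requires $c>r$, and here $r=1$, the condition becomes exactly $c>1$, which is what is assumed in the corollary; hence the moment formula applies verbatim and gives
\[
\mathbf{E}(X)=\frac{k}{2}\,\mathbf{B}\!\left(1-\frac{1}{c},\frac{1}{c}+k\right)\left\{(1+\varepsilon)^{2}+(-1)^{-1}(1-\varepsilon)^{2}\right\}.
\]

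It then remains only to simplify the bracketed factor. Because $(-1)^{-1}=-1$, the brace reduces to the difference of squares $(1+\varepsilon)^{2}-(1-\varepsilon)^{2}$, which expands to $4\varepsilon$. Substituting this back yields $\mathbf{E}(X)=\frac{k}{2}\,\mathbf{B}\!\left(1-\frac{1}{c},\frac{1}{c}+k\right)4\varepsilon$, as claimed.

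I do not expect any real obstacle: the statement is a one-line corollary, and the only points requiring a moment's care are the evaluation of the sign $(-1)^{-r}$ at $r=1$ and the observation that the convergence restriction $c>r$ collapses to $c>1$. Should one prefer a self-contained derivation, one could instead split $\mathbf{E}(X)=\int_{-\infty}^{0}xf(x)\,dx+\int_{0}^{\infty}xf(x)\,dx$ and apply the substitutions $x=-(1-\varepsilon)u^{-1/c}$ on the negative part and $x=(1+\varepsilon)u^{-1/c}$ on the positive part, invoking the beta integral~\eqref{betafunc}; this reproduces the two terms $-\tfrac{k}{2}\mathbf{B}(1-\tfrac1c,\tfrac1c+k)(1-\varepsilon)^2$ and $\tfrac{k}{2}\mathbf{B}(1-\tfrac1c,\tfrac1c+k)(1+\varepsilon)^2$ and the same cancellation, but it is strictly more work than quoting Theorem~\ref{rthmoment} directly.
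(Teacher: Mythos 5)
Your proposal is correct and matches the paper's (implicit) argument exactly: the corollary is simply Theorem~\ref{rthmoment} evaluated at $r=1$, where $(1+\varepsilon)^{2}-(1-\varepsilon)^{2}=4\varepsilon$ and the restriction $c>r$ becomes $c>1$. The alternative direct integration you sketch is just the theorem's own proof specialized to $r=1$, so nothing further is needed.
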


\begin{corollary}
Let $X\sim ESBIII(c,k,\varepsilon)$. The variance value of $X$ is
\begin{equation}
   Var(X)=\frac{k}{2} \mathbf{B}(1-\frac{2}{c},\frac{2}{c}+k)(2+6\varepsilon^2)-k^2 \mathbf{B}^2(1-\frac{1}{c},\frac{1}{c}+k)4\varepsilon^2 ,~~ c>2.
\end{equation}
\end{corollary}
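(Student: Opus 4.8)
The plan is to invoke the elementary identity $\mathrm{Var}(X) = \mathbf{E}(X^2) - (\mathbf{E}(X))^2$ and simply assemble the two ingredients from the results already proved. First I would apply Theorem~\ref{rthmoment} with $r = 2$, which is legitimate precisely when $c > 2$; this yields
\begin{equation*}
\mathbf{E}(X^2) = \frac{k}{2}\,\mathbf{B}\!\left(1-\tfrac{2}{c},\,\tfrac{2}{c}+k\right)\left\{(1+\varepsilon)^{3} + (-1)^{-2}(1-\varepsilon)^{3}\right\}.
\end{equation*}
Here the only point demanding a moment's care is the factor $(-1)^{-r}$ at $r=2$: it equals $+1$, so the two cubes add rather than cancel, and by direct expansion $(1+\varepsilon)^{3} + (1-\varepsilon)^{3} = 2 + 6\varepsilon^{2}$, giving $\mathbf{E}(X^2) = \tfrac{k}{2}\mathbf{B}(1-\tfrac2c,\tfrac2c+k)(2+6\varepsilon^{2})$, which is the first term of the claim.

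Next I would take $\mathbf{E}(X)$ from the preceding corollary, namely $\mathbf{E}(X) = \tfrac{k}{2}\mathbf{B}(1-\tfrac1c,\tfrac1c+k)\cdot 4\varepsilon$, and square it to obtain $(\mathbf{E}(X))^{2} = 4k^{2}\varepsilon^{2}\,\mathbf{B}^{2}(1-\tfrac1c,\tfrac1c+k)$. Subtracting this from $\mathbf{E}(X^2)$ reproduces the stated formula verbatim. Finally I would record that finiteness of the second moment forces $c > 2$, a condition which already subsumes the $c > 1$ needed for $\mathbf{E}(X)$, so $c>2$ is the operative restriction.

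I expect no genuine obstacle: the statement is essentially a one-line consequence of Theorem~\ref{rthmoment} and its first corollary, the only bookkeeping being the sign factor $(-1)^{-2}=1$, the algebraic identity above, and the sharpened moment-existence threshold.
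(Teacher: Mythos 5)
Your proposal is correct and follows exactly the route the paper intends: the corollary is an immediate consequence of Theorem~2 (with $r=2$, where $(-1)^{-2}=1$ and $(1+\varepsilon)^3+(1-\varepsilon)^3=2+6\varepsilon^2$) combined with the preceding corollary for $\mathbf{E}(X)$ via $\mathrm{Var}(X)=\mathbf{E}(X^2)-(\mathbf{E}(X))^2$. The paper offers no separate proof for this corollary, and your bookkeeping of the sign factor and the condition $c>2$ matches what is required.
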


\begin{theorem}\label{CharaFunwithoutmusg}	
Let $X \sim ESBIII(c,k,\varepsilon)$. The characteristic function is given by
\begin{equation}
\mathbf{E}(exp(itX))= \sum_{r=0}^{\infty}  \frac{k (it)^r}{2r!} \mathbf{B}(1-\frac{r}{c},\frac{r}{c}+k) \{(1+\varepsilon)^{r+1}+(-1)^{-r}(1-\varepsilon)^{r+1}\}, c>r.
\end{equation}
\end{theorem}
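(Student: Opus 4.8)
The plan is to recover the characteristic function directly from the moment sequence already established in Theorem~\ref{rthmoment}, by expanding the complex exponential in a power series and integrating term by term. First I would write, for $t\in\mathbf{R}$, $\mathbf{E}(\exp(itX))=\int_{\mathbf{R}}\exp(itx)\,f(x)\,dx$ and insert the Maclaurin series $\exp(itx)=\sum_{r=0}^{\infty}(it)^{r}x^{r}/r!$. Interchanging summation and integration then gives $\mathbf{E}(\exp(itX))=\sum_{r=0}^{\infty}\frac{(it)^{r}}{r!}\,\mathbf{E}(X^{r})$, which reduces the whole problem to substituting the known $r$th moment.

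The second step is purely algebraic. By Theorem~\ref{rthmoment}, $\mathbf{E}(X^{r})=\frac{k}{2}\,\mathbf{B}(1-\tfrac{r}{c},\tfrac{r}{c}+k)\{(1+\varepsilon)^{r+1}+(-1)^{-r}(1-\varepsilon)^{r+1}\}$ whenever $c>r$. Pulling the factor $\frac{k}{2\,r!}$ through the sum and collecting the beta-function and bracket terms yields exactly $\sum_{r=0}^{\infty}\frac{k(it)^{r}}{2\,r!}\,\mathbf{B}(1-\tfrac{r}{c},\tfrac{r}{c}+k)\{(1+\varepsilon)^{r+1}+(-1)^{-r}(1-\varepsilon)^{r+1}\}$, which is the asserted expression, and no further manipulation is needed once the interchange is granted.

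The hard part will be justifying the term-by-term integration. Because the Burr III tails decay only polynomially, $\mathbf{E}(X^{r})$ is finite precisely when $r<c$, so for any fixed finite $c$ only finitely many noncentral moments exist and the infinite sum cannot converge in the naive sense. I would therefore read the identity as an equality of formal power series matching the moment sequence, or — more rigorously — replace it by the finite Taylor expansion $\mathbf{E}(\exp(itX))=\sum_{r=0}^{n}\frac{(it)^{r}}{r!}\,\mathbf{E}(X^{r})+o(t^{n})$ as $t\to0$ with $n<c$, which follows from $\mathbf{E}|X|^{n}<\infty$ together with the standard moment-expansion lemma for characteristic functions; the coefficients are then supplied by Theorem~\ref{rthmoment}. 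In either reading the restriction $c>r$ is exactly the condition recorded in the statement, and the Fubini step on the truncated sum $\sum_{r\le n}|t|^{r}|x|^{r}/r!$ is unproblematic since each term is integrable against $f$ under that restriction.
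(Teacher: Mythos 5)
Your argument is the same one the paper uses: expand $\exp(itx)=\sum_{r\ge0}(itx)^r/r!$, integrate term by term, and substitute the $r$th moment from Theorem~\ref{rthmoment} --- the paper's entire proof is the single sentence saying exactly this. Where you go beyond the paper is in the third paragraph, and that addition is not pedantry but a genuine repair of a defect the paper leaves unaddressed: since the Burr III tails are polynomial, $\mathbf{E}|X|^r<\infty$ only for $r<c$, and indeed $\mathbf{B}(1-\tfrac{r}{c},\tfrac{r}{c}+k)$ is undefined once $1-\tfrac{r}{c}\le 0$, so the displayed infinite series does not converge (or even make sense termwise) for any fixed finite $c$; the qualifier ``$c>r$'' attached to a sum over all $r\ge 0$ is incoherent as stated. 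Your proposed reading --- either as a formal moment generating identity or, more rigorously, as the finite expansion $\mathbf{E}(\exp(itX))=\sum_{r=0}^{n}\frac{(it)^r}{r!}\mathbf{E}(X^r)+o(t^n)$ as $t\to0$ for $n<c$, justified by the standard moment-expansion lemma for characteristic functions --- is the correct way to salvage the statement, and the Fubini step is then unproblematic exactly as you say. In short: same computation as the paper, but you have correctly identified that the unconditional interchange of sum and integral fails here, which the paper's one-line proof silently assumes.
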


\begin{proof}
The Taylor expansion of the function
$exp(itx) = \sum_{r=0}^{\infty} \frac{(itx)^r}{r!}$
can be used to calculate the integral $E(exp(itX))$.
\end{proof}

\begin{corollary}\label{CharaFunmusg}	
Let $Y \sim ESBIII(\mu,\sigma,c,k,\varepsilon)$. The characteristic function (CF) for $\mu$ and $\sigma$ is given by
\begin{equation}
\mathbf{E}(exp(itY))= exp(it \mu) \sum_{r=0}^{\infty}  \frac{k (it \sigma)^r}{2r!} \mathbf{B}(1-\frac{r}{c},\frac{r}{c}+k) \{(1+\varepsilon)^{r+1}+(-1)^{-r}(1-\varepsilon)^{r+1}\}.
\end{equation}
\end{corollary}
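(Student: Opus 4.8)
The plan is to reduce the computation to the characteristic function of the standardized variable already obtained in Theorem~\ref{CharaFunwithoutmusg}. By the location--scale definition, $Y = \sigma X + \mu$ with $X \sim ESBIII(c,k,\varepsilon)$ and $\sigma>0$, $\mu\in\mathbf{R}$ fixed constants. Hence
\[
\mathbf{E}(\exp(itY)) = \mathbf{E}(\exp(it(\sigma X+\mu))) = \exp(it\mu)\,\mathbf{E}(\exp(i(t\sigma)X)).
\]
First I would justify pulling the deterministic factor $\exp(it\mu)$ out of the expectation, since it is a constant, and then recognize the remaining expectation as the characteristic function of $X$ evaluated at the argument $t\sigma$.

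Second, I would substitute $t \mapsto t\sigma$ into the series of Theorem~\ref{CharaFunwithoutmusg}, giving
\[
\mathbf{E}(\exp(i(t\sigma)X)) = \sum_{r=0}^{\infty} \frac{k(it\sigma)^r}{2\,r!}\,\mathbf{B}\!\left(1-\tfrac{r}{c},\tfrac{r}{c}+k\right)\{(1+\varepsilon)^{r+1}+(-1)^{-r}(1-\varepsilon)^{r+1}\},
\]
and multiply through by $\exp(it\mu)$ to obtain the stated formula. Alternatively, one could proceed directly from the density in equation~\eqref{locationscaleESBIII}, expanding $\exp(ity)$ in its Taylor series and integrating term by term via the beta-function identity~\eqref{betafunc} after the substitutions $\tfrac{y-\mu}{\sigma} = (1+\varepsilon)u^{-1/c}$ and $\tfrac{y-\mu}{\sigma} = -(1-\varepsilon)u^{-1/c}$ on the two half-lines; but then the binomial expansion of $(\mu+\sigma z)^r$ would have to be re-summed into the product $\exp(it\mu)\times(\text{series})$, which is considerably more laborious, so the affine-substitution route is preferable.

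The only genuine point requiring care is the interchange of summation (equivalently, the Taylor expansion of $\exp$) with the integral defining the expectation; this is the same convergence issue already handled in Theorem~\ref{CharaFunwithoutmusg}, controlled by the constraint $c>r$ there together with dominated convergence on compact $t$-intervals. Granting that, the corollary is immediate from Theorem~\ref{CharaFunwithoutmusg} and the change of variable, so the ``hard part'' is bookkeeping rather than any new estimate.
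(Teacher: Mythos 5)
Your proof is correct and follows exactly the route the paper intends: the corollary is obtained from Theorem~\ref{CharaFunwithoutmusg} by writing $Y=\sigma X+\mu$, factoring out $\exp(it\mu)$, and evaluating the characteristic function of $X$ at $t\sigma$. Your remarks on the alternative density-based route and on the term-by-term integration issue (which is inherited from, and no worse than, the theorem itself) are apt but not needed beyond what the paper implicitly assumes.
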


It is well known that $r!=\Gamma(r+1)$. Thus, the fractional form of CF will be proposed. It will be beneficial for the non integer order values for quantitative $r$, $r>0$.

Since the CDF is tractable, it is easy to propose the quantile function $x_q$ of $ESBIII$.

\begin{equation}\label{dfESB3}
F^{-1}(p)=x_q=\left\{
  \begin{array}{ll}
   (1+\varepsilon)\{ [\frac{2F(x)}{1+\varepsilon}-\frac{1-\varepsilon}{1+\varepsilon} ]^{-1/k} -1 \}^{-1/c} , & x \geq 0 \\
   -(1-\varepsilon)[(1-\frac{2F(x)}{1-\varepsilon})^{-1/k}-1]^{-1/c} , & x<0.
  \end{array}
\right.
\end{equation}

It is commonly used the R\'{e}nyi entropy for a random variable $X$ with PDF $f$. It is defined to be
\begin{equation}\label{Renyientropy}
 \frac{1}{1-\alpha}\log\int_{-\infty}^{\infty}f^{\alpha}(x)dx.
\end{equation}
The following theorem is the R\'{e}nyi entropy for the ESBIII distribution.
\begin{theorem}
 Let the random variable $X$ have an $ESBIII(0,1,c,k,\varepsilon)$ distribution. The R\'{e}nyi entropy of $X$ is given by
\begin{equation}\label{RenyientropyESBIII}
 \frac{1}{1-\alpha}\log(I_1+I_2),
\end{equation}
where $\alpha>0$ and $\alpha \neq 1$,
\begin{equation}
I_1=(1-\varepsilon)(\frac{ck}{2})^{\alpha}\Gamma(\alpha(k+1)-\alpha(1+1/c)+1/c) \frac{\Gamma(\alpha(1+1/c)-1/c)}{c\Gamma(\alpha(k+1))}
\end{equation}
 and
 \begin{equation}
 I_2=(1+\varepsilon)(\frac{ck}{2})^{\alpha}\Gamma(\alpha(k+1)-\alpha(1+1/c)+1/c) \frac{\Gamma(\alpha(1+1/c)-1/c)}{-c\Gamma(\alpha(k+1))}.
 \end{equation}
 \end{theorem}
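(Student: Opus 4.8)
The plan is to compute $\int_{-\infty}^{\infty} f^{\alpha}(x)\,dx$ directly and then read off \eqref{RenyientropyESBIII} from \eqref{Renyientropy}. Because the density \eqref{pdfESB3} is defined piecewise through $sign(x)$, the natural first move is to split $\int_{-\infty}^{\infty} f^{\alpha} = \int_{-\infty}^{0} f^{\alpha} + \int_{0}^{\infty} f^{\alpha}$ and to show that one half equals $I_1$ and the other equals $I_2$. Since $\alpha>0$ and $\alpha\neq 1$, once the integral is shown to be finite the factor $\tfrac{1}{1-\alpha}\log(\cdot)$ is unambiguous.

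On $x>0$ we have $sign(x)=1$, so $f(x)=\tfrac{ck}{2}\big(\tfrac{x}{1+\varepsilon}\big)^{-(c+1)}\big(1+(\tfrac{x}{1+\varepsilon})^{-c}\big)^{-(k+1)}$ and hence $f^{\alpha}(x)=\big(\tfrac{ck}{2}\big)^{\alpha}\big(\tfrac{x}{1+\varepsilon}\big)^{-\alpha(c+1)}\big(1+(\tfrac{x}{1+\varepsilon})^{-c}\big)^{-\alpha(k+1)}$. I would then apply the substitution already used in the proof of Theorem~\ref{rthmoment}, namely $u=(x/(1+\varepsilon))^{-c}$, i.e. $x=(1+\varepsilon)u^{-1/c}$ and $dx=-\tfrac{1+\varepsilon}{c}\,u^{-1/c-1}\,du$, which maps $x\in(0,\infty)$ onto $u\in(\infty,0)$; reversing the limits kills the minus sign and, after collecting powers of $u$ (the exponent $\alpha(c+1)/c-1/c-1$ simplifies to $(\alpha-1)(1+1/c)$), leaves $\int_{0}^{\infty} f^{\alpha} = \big(\tfrac{ck}{2}\big)^{\alpha}\tfrac{1+\varepsilon}{c}\int_{0}^{\infty} u^{(\alpha-1)(1+1/c)}(1+u)^{-\alpha(k+1)}\,du$.

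The surviving integral is exactly of the beta form \eqref{betafunc}: $\int_{0}^{\infty} u^{a-1}(1+u)^{-a-b}\,du=\mathbf{B}(a,b)=\Gamma(a)\Gamma(b)/\Gamma(a+b)$. Matching exponents gives $a=(\alpha-1)(1+1/c)+1=\alpha(1+1/c)-1/c$ and $a+b=\alpha(k+1)$, hence $b=\alpha(k+1)-\alpha(1+1/c)+1/c$, and substituting $\Gamma(a)\Gamma(b)/\Gamma(a+b)$ back in reproduces $I_2$ (with denominator $c\,\Gamma(\alpha(k+1))$). For $x<0$ the computation is the mirror image: $sign(x)=-1$ turns $f$ into $\tfrac{ck}{2}\big(\tfrac{-x}{1-\varepsilon}\big)^{-(c+1)}\big(1+(\tfrac{-x}{1-\varepsilon})^{-c}\big)^{-(k+1)}$, the substitution $u=(-x/(1-\varepsilon))^{-c}$ runs the same way with $1+\varepsilon$ replaced by $1-\varepsilon$, and one obtains $I_1$. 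Summing the two contributions and inserting into \eqref{Renyientropy} yields \eqref{RenyientropyESBIII}.

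The bookkeeping of the $u$-exponent and of the Jacobian's $1/c$ is routine but is where a slip is most likely; the one point of real substance is convergence. The beta integral exists only when both Gamma arguments are positive, i.e. $a=\alpha(1+1/c)-1/c>0$ and $b=\alpha(k+1)-\alpha(1+1/c)+1/c>0$; I would record these inequalities as the standing hypotheses for \eqref{RenyientropyESBIII} (they hold in particular for $\alpha\ge1$, and for $\alpha\in(0,1)$ they cut out the admissible range of $\alpha$). No independence argument or distribution-function manipulation is needed here — just one change of variables on each half-line together with the beta--gamma identity.
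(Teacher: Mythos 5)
Your proof follows essentially the same route as the paper's: split the integral at zero, substitute $u=x/(1\pm\varepsilon)$ and then $u\mapsto u^{-c}$ to reduce each half to the beta integral \eqref{betafunc}, and read off the Gamma functions. Your more careful handling of the Jacobian is worth keeping, because it exposes a sign slip in the stated result: the substitution $t=u^{-1/c}$ contributes a factor $-\tfrac1c$ whose sign is cancelled by reversing the limits of integration, so the positive-side contribution comes out with denominator $+c\,\Gamma(\alpha(k+1))$, exactly as you found — not the $-c\,\Gamma(\alpha(k+1))$ printed in $I_2$. As stated, the theorem would give $I_1+I_2=0$ at $\varepsilon=0$ and an undefined entropy, so your corrected sign (and your explicit convergence conditions $\alpha(1+1/c)-1/c>0$ and $\alpha(k+1)-\alpha(1+1/c)+1/c>0$, which the paper omits) should be adopted.
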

\begin{proof}
The positive side of PDF $f$ with R\'{e}nyi entropy can be written as
$f_{+}^{\alpha}(x)=(\frac{ck}{2})^{\alpha}(\frac{x}{1+\varepsilon})^{-\alpha(c+1)}\bigg(1+(\frac{x}{1+\varepsilon})^{-c}\bigg)^{-\alpha(k+1)}$.
To get the result of the integral, $\frac{x}{1+\varepsilon}$ is taken to be $u$. After some straightforward calculation, the integral can be rewritten as following form,
\begin{eqnarray*}
I_2&=&  (1+\varepsilon)(\frac{ck}{2})^{\alpha}\int_{0}^{\infty} u^{-\alpha(c+1)}(1+u^{-c})^{-\alpha(k+1)}du \\ \nonumber
  &=& (1+\varepsilon)(\frac{ck}{2})^{\alpha} \Gamma(\alpha(k+1)-\alpha(1+1/c)+1/c) \frac{\Gamma(\alpha(1+1/c)-1/c)}{-c\Gamma(\alpha(k+1))}.
\end{eqnarray*}
The same procedure can be done for the negative side of PDF $f$.
\qed
\end{proof}

\section{ML Estimators of Parameters in ESBIII and an Iterative Reweighting Approach for Computation}\label{MLELSESBIII}
Let $x_1,x_2,...,x_{n}$ be a random sample of size $n$ from a $ESBIII$ distributed population.
We would like to estimate the unknown parameters $\mu,\sigma,c,k,\varepsilon$. In this sense, we can prefer to use MLE and then the log-likelihood function is
\begin{eqnarray}
 l &=& n \log(\frac{ck}{2\sigma})-(c+1)\sum_{i=1}^{n} \log(\frac{ sign(\frac{x_{i}-\mu}{\sigma}) (x_{i}-\mu)}{\sigma [1+sign(\frac{x_{i}-\mu}{\sigma}) \varepsilon]}) \\ \nonumber
  && -(k+1)\sum_{i=1}^{n} \log(1+(\frac{ sign(\frac{x_{i}-\mu}{\sigma}) (x_{i}-\mu)}{\sigma [1+sign(\frac{x_{i}-\mu}{\sigma}) \varepsilon]})^{-c}),\\ \nonumber
\end{eqnarray}
\noindent where $sign$ is the signum function and $sign(0)=1$.

 The maximum likelihood estimators of the parameters $\mu,\sigma,c,k$ and $\varepsilon$ will be solution of the following  equations

\begin{eqnarray}\label{nonlineareqmuu}
  \frac{\partial l}{\partial \mu}  &=& (c+1)\sum_{i=1}^{n} (x_{i}-\mu)^{-1}-c(k+1)\sum_{i=1}^{n} \frac{(x_{i}-\mu)^{-1}}{1+z_{i}^{c}}=0,
\end{eqnarray}

\begin{eqnarray}\label{nonlineareqsigma}
  \frac{\partial l}{\partial \sigma}  &=& \frac{nc}{\sigma}-c(k+1)\sum_{i=1}^{n} \frac{\sigma^{-1}}{1+z_{i}^{c}}=0,
\end{eqnarray}

\begin{eqnarray}\label{nonlineareqc}
   \frac{\partial l}{\partial c} &=& \frac{n}{c}-\sum_{i=1}^{n} \log(z_{i}) +(k+1)\sum_{i=1}^{n} \frac{\log(z_{i})}{1+z_{i}^{c}}=0,
\end{eqnarray}

\begin{eqnarray}\label{nonlineareqk}
    \frac{\partial l}{\partial k}  &=&  \frac{n}{k}-\sum_{i=1}^{n} \log\big(1+z_{i}^{-c}\big)=0,
\end{eqnarray}

\begin{eqnarray}\label{nonlineareqeps}
  \frac{\partial l}{\partial \varepsilon}  &=& (c+1)\sum_{i=1}^{n} (s_{i}+\varepsilon)^{-1}-c(k+1)\sum_{i=1}^{n} \frac{(s_{i}+\varepsilon)^{-1}}{1+z_{i}^{c}}=0,
\end{eqnarray}

\noindent where $s_{i}=sign(\frac{x_{i}-\mu}{\sigma}) $ and $z_{i}=\frac{s_{i}(x_{i}-\mu)}{\sigma [1+s_{i} \varepsilon]}$.\\

Since it is not possible to solve these equations analytically, the numerical methods should be used to obtain the $ML$ estimators.
Eqs. (\ref{nonlineareqmuu}) - (\ref{nonlineareqeps}) are solved in simultaneous time as an iteratively way, using the nonlinear solving
techniques in MATLAB 2013a. An example for solving an equation in Eqs. (\ref{nonlineareqmuu}) - (\ref{nonlineareqeps}) is as follow,

\begin{equation}\label{compustep}
 \hat{\mu}^{(i+1)} := h(\mu,\hat{\sigma}^{(i)},\hat{c}^{(i)},\hat{k}^{(i)},\hat{\varepsilon}^{(i)}).
\end{equation}

As it is seen from equation \eqref{compustep}, the initial values of other parameters are provided.
The function $h$ representing the derivative of log-likelihood function with respect to parameter $\mu$ is solved
according to parameter $\mu$ via the nonlinear solving techniques in MATLAB 2013a. The same procedure was conducted for the other parameters of ESBIII.
While conducting procedure for solving, each step $i$ in equation \eqref{compustep} is computed until the prescribed value provided to finalize the
iterative algorithm is reached. The same procedure was conducted for the estimations of parameters of other distributions used to show
the fitting competence of ESBIII distribution. Kolmogorov-Smirnov statistic (KS) is a good tool to test whether or not the initial
values of parameters can be selected appropriately. The different initial values were tried until high value of KS is reached.
The approach can be more preferable when the computational issue for solving the nonlinear equations simultaneously has been arose.

\section{On The Robustness Properties of ESBIII Distribution}

\subsection{Robustness of ML estimators of parameters in ESBIII distribution}

ML estimators will be robust if the norm of influence function (IF) of ML estimators is bounded. IF is defined to be a linear transformation of the score vector.
As given in \cite{Hampel86}, IF for $\hat{\theta}$ is

\begin{equation}\label{IFdefinition}
IF(x;\hat{\theta})=-(E[\partial \psi(x;\boldsymbol \theta)/\partial \theta])^{-1}\psi(x;\boldsymbol \theta).
\end{equation}

\noindent It means that IF is proportional to the score functions, where $\theta = (\mu,\sigma,c,k,\varepsilon)$ is a vector of parameters.
$\psi$ is derivative of $-\log(f)$ with respect to the parameters and it is defined to be the score function.
It is noted that $-\log(f)$ can be regarded as an objective function $\rho$ in the context of robustness.
Here, the components of the local robustness of ML estimators are introduced.\\

\begin{definition}
Let $\psi$ be a score function derived by derivative of $-\log(f)$ with respect to parameters estimated. If $-\log(f)$ is differentiable,
then the function $\psi$ will be defined to be a score function of parameters \cite{Hampel86}.
\end{definition}

We will examine whether or not the score functions are bounded. When the limit of score function $\psi_{\theta}$ with respect to $x$ is bounded, ML estimators will be robust.\\

The functional forms of the score function corresponding to the parameters $\mu,\sigma,c,k,\varepsilon$ are
\begin{equation}\label{psimu}
  \psi_{\mu}(x)=\frac{c+1}{x}-\frac{(k+1)c(\frac{sign(x)x}{1+sign(x)\varepsilon})^{-c}}{x(1+(\frac{sign(x)x}{1+sign(x)\varepsilon})^{-c})},
\end{equation}

\begin{equation}\label{psisigma}
  \psi_{\sigma}(x)=c-\frac{(k+1)c(\frac{sign(x)x}{(1+sign(x)\varepsilon)})^{-c}}{1+(\frac{sign(x)x}{1+sign(x)\varepsilon})^{-c}},
\end{equation}

\begin{equation}\label{psic}
  \psi_{c}(x)= \frac{1}{c}-\log(\frac{sign(x)x}{1+sign(x)\varepsilon})+\frac{(k+1)(\frac{sign(x)x}{1+sign(x)\varepsilon})^{-c}\log(\frac{sign(x)x}{1+sign(x)\varepsilon})}{1+(\frac{sign(x)x}{1+sign(x)\varepsilon})^{-c}},
\end{equation}

\begin{equation}\label{psik}
  \psi_{k}(x)=\frac{1}{k}-\log\bigg(1+(\frac{sign(x)x}{1+sign(x)\varepsilon})^{-c} \bigg)
\end{equation}
\noindent and
\begin{equation}\label{psie}
  \psi_{\varepsilon}(x)=\frac{sign(x)(c+1)}{1+sign(x)\varepsilon}-sign(x)\frac{(k+1)c(\frac{sign(x)x}{1+sign(x)\varepsilon})^{-c}}{(1+sign(x)\varepsilon)(1+(\frac{sign(x)x}{1+sign(x)\varepsilon})^{-c})}.
\end{equation}

$\underset{x \rightarrow \pm \infty}{lim} \psi_\mu(x) = 0$, $\underset{x \rightarrow \pm \infty}{lim} \psi_\sigma(x) = c$,
$\underset{x \rightarrow \pm \infty}{lim} \psi_c(x) =  \infty$, $\underset{x \rightarrow \pm \infty}{lim} \psi_k(x) = 1/k$
and $\underset{x \rightarrow \pm \infty}{lim} \psi_\varepsilon(x) = \pm \frac{c+1}{1+sign(x)\varepsilon}$.
It is seen that the limit values of score functions for $\mu,k$ and $\varepsilon$ are finite,
but the limit value of $\psi_c(x)$ is infinite. Since the norm of influence function
$IF_{\mu,\sigma,k,\varepsilon}(x)$ of ML estimators for the parameters $\mu,\sigma,k$ and $\varepsilon$ is
bounded when $c$ is fixed and finite, the ML estimators for them are locally robust.
This shows us that the parameter $c$ must be fixed and finite. \cite{ArslanGenc09,Acitas13} and \cite{Lucas97} give the same discussion on the boundness of score function.



\begin{definition}
Let $\psi$ be a score function. Suppose that there is a point $x_0$. For  $0<x \leq x_0$, $\psi$ is (weakly) increasing.
For $x_0 < x < \infty$, $\psi$ is (weakly) decreasing. Then, $\psi$ is defined to be a redescending function \cite{Huber84}.
\end{definition}

\begin{enumerate}
\item $\rho(0)=0$  \cite{Huber84,ZhangLi98}.
\item $\underset{|x| \rightarrow \infty}{lim} \rho (x) = \infty$  \cite{Huber84,ZhangLi98}.
\item $\underset{|x| \rightarrow \infty}{lim} \frac{\rho (x)}{|x|} = 0$ \cite{Huber84}.
\item  For  $0<x \leq x_0$, $\psi$ is weakly increasing. For $x_0 < x < \infty$, $\psi$ is weakly decreasing. There is a such a point $x_0$  \cite{Huber84}.
\end{enumerate}
$\rho_{ESBIII}(x)=-\log(f)$. Let us show whether or not these conditions are satisfied.
\begin{enumerate}
\item $\rho_{ESBIII}(0)=0$ is not satisfied.
\item $\underset{|x| \rightarrow \infty}{lim} \rho_{ESBIII} (x) = \infty$ is satisfied.
\item $\underset{|x| \rightarrow \infty}{lim} \frac{\rho_{ESBIII} (x)}{|x|} = 0$ is satisfied.
\item This condition is satisfied for only the  MLE of parameter $\mu$.
\end{enumerate}
The condition $4$ is approved by the following lines.

It should be given some few comments. The derivatives of score functions with respect to $x$ for the parameters
$s,c,k,\varepsilon$ can not be solved according to $x$. Thus, it is not possible to find an exact
point $x_0$. From this result, it is said that the score functions of  parameters $s,c,k,\varepsilon$
 can not be in redescending function. Since a point $x_0$ can not be gotten,
it is not possible to imply that these functions are redescending.

\begin{proposition}
$\psi_\mu$ is a score function for parameter $\mu$. It can be possible to find a point $x_0$ at least for $\psi_\mu$.
$x_0={\frac {1}{a} \left( \,{\frac {-{c}^{2}k-{c}^{2}-ck+c+2+\sqrt {{c}^
{4}{k}^{2}+2\,{c}^{4}k+2\,{c}^{3}{k}^{2}+{c}^{4}+{c}^{2}{k}^{2}-2\,{c}
^{3}-2\,{c}^{2}k-3\,{c}^{2}}}{2(ck-1)}} \right) ^{-{c}^{-1}}}$,
\noindent where $a=\frac{sign(x)}{1+sign(x)\varepsilon}$.
\end{proposition}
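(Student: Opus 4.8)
The plan is to locate the turning point of $\psi_\mu$ by solving $\psi_\mu'(x)=0$ explicitly. Condition 4 in the list above concerns only $x>0$, so it suffices to work there; then $sign(x)=1$ and $\frac{sign(x)x}{1+sign(x)\varepsilon}=ax$ with $a=\frac{1}{1+\varepsilon}>0$. First I would rewrite \eqref{psimu} in the compact form
$$\psi_\mu(x)=\frac{c+1}{x}-\frac{(k+1)c}{x\bigl(1+(ax)^{c}\bigr)},$$
obtained by multiplying numerator and denominator of the second term of \eqref{psimu} by $(ax)^{c}$. After this simplification $\psi_\mu$ depends on $x$ only through $t=(ax)^{c}$ together with the explicit factor $1/x$, so its derivative is tractable.

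Next I would differentiate: using $\frac{d}{dx}(ax)^{c}=\frac{c}{x}(ax)^{c}$ one gets $\frac{d}{dx}\bigl[x+x(ax)^{c}\bigr]=1+(c+1)(ax)^{c}$, hence
$$\psi_\mu'(x)=-\frac{c+1}{x^{2}}+\frac{(k+1)c\bigl(1+(c+1)(ax)^{c}\bigr)}{x^{2}\bigl(1+(ax)^{c}\bigr)^{2}}.$$
Setting this to zero and cancelling the positive factor $x^{-2}$ reduces the equation to $(k+1)c\bigl(1+(c+1)t\bigr)=(c+1)(1+t)^{2}$ with $t=(ax)^{c}$. Substituting $t=1/w$, i.e. $w=(ax)^{-c}$, and multiplying through by $w^{2}$ turns this into the quadratic
$$(ck-1)\,w^{2}+(c+1)(ck+c-2)\,w-(c+1)=0,$$
and the quadratic formula gives
$$w=\frac{-(c+1)(ck+c-2)\pm\sqrt{(c+1)^{2}(ck+c-2)^{2}+4(ck-1)(c+1)}}{2(ck-1)}.$$
A short expansion shows $-(c+1)(ck+c-2)=-c^{2}k-c^{2}-ck+c+2$ and that the quantity under the root equals $c^{4}k^{2}+2c^{4}k+2c^{3}k^{2}+c^{4}+c^{2}k^{2}-2c^{3}-2c^{2}k-3c^{2}$. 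Since the product of the two roots is $-(c+1)/(ck-1)$, which is negative for $ck>1$, exactly one root is positive, namely the one with the $+$ sign; as $w=(ax)^{-c}$ must be positive for $x>0$ this is the only admissible value, call it $w_{0}$. Inverting $w_{0}=(ax_{0})^{-c}$ gives $x_{0}=\frac1a\,w_{0}^{-1/c}$, which is exactly the asserted expression.

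Finally I would check that $x_{0}$ is genuinely where $\psi_\mu$ passes from increasing to decreasing. From the limits recorded before the statement, $\psi_\mu(x)\to0$ as $x\to\infty$, while as $x\to0^{+}$ one has $\psi_\mu(x)\sim(1-ck)/x\to-\infty$ when $ck>1$; since $\psi_\mu$ is continuous on $(0,\infty)$ and $\psi_\mu'=0$ has the single solution $x_{0}$ there, $\psi_\mu$ is weakly increasing on $(0,x_{0}]$ and weakly decreasing on $[x_{0},\infty)$, so condition 4 holds for $\psi_\mu$. The main obstacle I expect is purely computational: pushing the differentiation and the elimination of denominators through to the quadratic without error, and then matching its two ingredients — the linear coefficient in the numerator and the quartic-in-$c$ expression under the radical — with those in the statement. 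Some care is also needed in selecting the correct positive root, which is what forces $ck\neq1$ (visible in the denominator $2(ck-1)$) and makes the claim genuinely informative in the bimodal regime $ck>1$.
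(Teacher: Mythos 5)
Your derivation is correct and is precisely the computation the paper leaves implicit --- the proposition is stated with no proof at all, and your route (restrict to $x>0$, rewrite $\psi_\mu(x)=\frac{c+1}{x}-\frac{(k+1)c}{x(1+(ax)^{c})}$, solve $\psi_\mu'(x)=0$ via $w=(ax)^{-c}$ to get $(ck-1)w^{2}+(c+1)(ck+c-2)w-(c+1)=0$) reproduces exactly the stated numerator $-c^{2}k-c^{2}-ck+c+2$ and the radicand $c^{4}k^{2}+2c^{4}k+2c^{3}k^{2}+c^{4}+c^{2}k^{2}-2c^{3}-2c^{2}k-3c^{2}$, which I have verified by expansion. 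Your selection of the unique positive root when $ck>1$ and the inversion $x_{0}=\frac{1}{a}w_{0}^{-1/c}$ match the claimed formula, so your argument supplies the justification the paper omits.
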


Thus, it is possible to imply that the function $\psi_\mu$ is redescending.

\subsection{Tail behaviour of ESBIII distribution}

To call a heavy-tailed distribution or tail behaviour of a distribution, some definitions are proposed in \cite{Heavytail}:


\begin{definition}\label{def2}
 Let $\bar{F}(x)$ be $1-F(x)$. If $\underset{x \rightarrow + \infty}{lim} exp(\lambda x)\bar{F}(x) = \infty$ for all $\lambda >0$, then $F(x)$ is a heavy-tailed.
 \end{definition}

From equation (5), the CDF is 
$\frac{1-\varepsilon}{2}+(\frac{1+\varepsilon}{2})(1+(\frac{x}{1+\varepsilon})^{-c})^{-k}$ for $x \geq 0$. Here, the part $\frac{1-\varepsilon}{2}$ is negative side of PDF. This part is due to variable transformation. It leads to asymmetrization on a symmetric function. Thus, it can be hold on for the representation of negative side of PDF. From this clarification,
$(\frac{1+\varepsilon}{2})(1+(\frac{x}{1+\varepsilon})^{-c})^{-k}$ 
is a part for the positive side of CDF. In such a case, 
$\bar{F}(x)=\frac{1+\varepsilon}{2}-(\frac{1+\varepsilon}{2})(1+(\frac{x}{1+\varepsilon})^{-c})^{-k}$. 
Let us show the $\underset{x \rightarrow + \infty}{lim} exp(\lambda x)\bar{F}(x)$ for the ESBIII. As it is seen, 
$\underset{x \rightarrow + \infty}{lim} exp(\lambda x)\bar{F}(x)=\infty$. 
As a result, this property for heavy-tailedness is satisfied.


Since $ESBIII$ distribution can be considered as a heavy-tailed distribution, the informative behaviour can be sensible degree when the data set includes one or more outliers.
In the following section, real data examples will be considered. Some useful comments can be given in the following lines.

As it is seen from Figure $1$, the platikurtic-leptokurtic property is hold for the ESBIII.
Thus, ESBIII can be a good candidate for the data modelling when there is a comparison test
between ESBIII and ESEP. When we look at the advantage of ESBIII, it is observed that ESBIII
can be preferable in the context of CDF performance, because  the CDF of ESEP distributions
depends on the incomplete gamma function. For the CDF of ESBIII, we have an explicit analytical form,
which does make that KS test statistic of ESBIII is free from the computational error occurred by the
special function that is incomplete gamma function. The second derivative test is not examined for
the parameters of ESBIII distribution, because the analytical form of PDF is not easy to get the Hessian
matrix and its eigenvalues to check that the eigenvalues are negative for the parameters $c >0$ and $k>0$.
However, there is a good way to overcome this problem, because ESBIII has an explicit analytical form for
CDF, in other words, it is free from a special function such as incomplete gamma function. Thus,
this explicit analytical form will help us to control the efficient fitting performance of the ESBIII via KS test statistic.

%
%

\section{Real Data Application}\label{applications}
In this section, two real data sets will be used to illustrate the modeling capability and robustness property of the proposed distribution.


The bimodal distributions having the CDF are considered. Some of them are the $\varepsilon$-skew exponential distribution, its bimodal version with different height  ($BEGG(\alpha,\beta,$ $\delta_0, \delta_1,\eta,\varepsilon,\mu,\sigma)$) considered by \cite{Elsalloukhetal05,Cankayaetal15}, respectively. The special case of the $\varepsilon$-skew exponential power distribution is the $\varepsilon$-skew Laplace distributions ($ESL$). The detailed discussion about the special cases of $BEGG$ distribution can be found in \cite{Cankayaetal15}. The generalized $t$ distribution in the sense of $\varepsilon-$skew form considered by \cite{ArslanGenc09} ($ESGT(p,q,\varepsilon,\mu,\sigma)$), the $\alpha$-skew Laplace distribution ($ASL(\alpha,\mu,\sigma)$) considered by \cite{Sham13} and the exponentiated sinh cauchy distribution ($ESC(\lambda,\beta,\mu,\sigma)$) considered by \cite{Cooray13} are used to model the data set.


\textbf{Example 1}: \cite{Arslan09a,Arslan09b} modelled the univariate cases of the data sets in cDNA microarray. They proposed parametric models. These models are used to fit the data sets. This data sets were also considered by \cite{Acitas13,PurdomHol05} for modelling data sets in the univariate case. In this study, we consider to model the cDNA microarray datasets. In these data sets, AT-matrix: 1375 genes $\times$ 118  drugs which are available at the web site
\begin{verbatim}http://discover.nci.nih.gov/
nature2000/data/selected_data/at_matrix.txt\end{verbatim}
are considered to test the performance of $ESBIII$.\\

In these data sets, the column one of gene-drug correlation data, which is named as "sid 487493, erythrocyte band 7
integral membrane protein [5,3:aa045112]", will be modelled by means of ESBIII distribution.
The candidates of $\varepsilon$-skew distribution family (ESEP) were proposed by \cite{Elsalloukhetal05} and \cite{Mud00}.
The scale mixture form of ESEP was considered by \cite{ArslanGenc09}, which can be considered as a $\varepsilon$-skew generalized $t$ distribution.
In this framework, we want to make a comparison among the distributions used in this work.

\begin{tiny}
\begin{table}[htb]\label{esepesb3ex1mnc}
\begin{center}
\caption{Parameter estimates for the microarray data set for the different parametric models}
\begin{tabular}{c c c c c c c}
\hline Parametric Models &$\hat{\mu}$ & $\hat{\sigma}$  & $\hat{\varepsilon}$ & $P(KS)$ & $AIC$  \\ [0.5ex]  
\hline \hline 
ESBIII ($\hat{c}= 2.3826, \hat{k}= 0.7786$)  &  -0.0061 &  0.0770 & 0.0533  & 0.8478 & -432.4893 \\
ESL ($\alpha=1$) & 0.0130 & 0.0770 & 0.0484 & 0.0838 & -352.1553 \\
ESGT($p=1,q=3/4$)  & 0.0440 & 0.0549 & -0.2773 & 0.0540 &   -107.9998 \\
\begin{tiny} BEGG($\hat{\alpha}=0.9824,\hat{\beta}=1,\hat{\delta_0}=0.0001,\hat{\delta_1}=0.4622,\hat{\eta}=0.7277$) \end{tiny} & 0.0105 & 0.0701 & 0.2301 & 0.2900 & -82.5059 \\
ASL($\hat{\lambda}=0.0781)$ & 0.0193 & 0.1022      &- & 0.5321  & -173.2460  \\
ESC($\hat{\lambda}=2.3591, \hat{\beta}=0.944) $ & 0.0051 & 0.1522   &- & 0.4037 &  -158.4236 \\
 \hline
\end{tabular}
\label{sheeptable}
\end{center}
\end{table}\end{tiny}


\begin{figure}[!htb]
\centering
 \begin{subfigure}{.55\linewidth}
    \includegraphics[width=.85\textwidth]{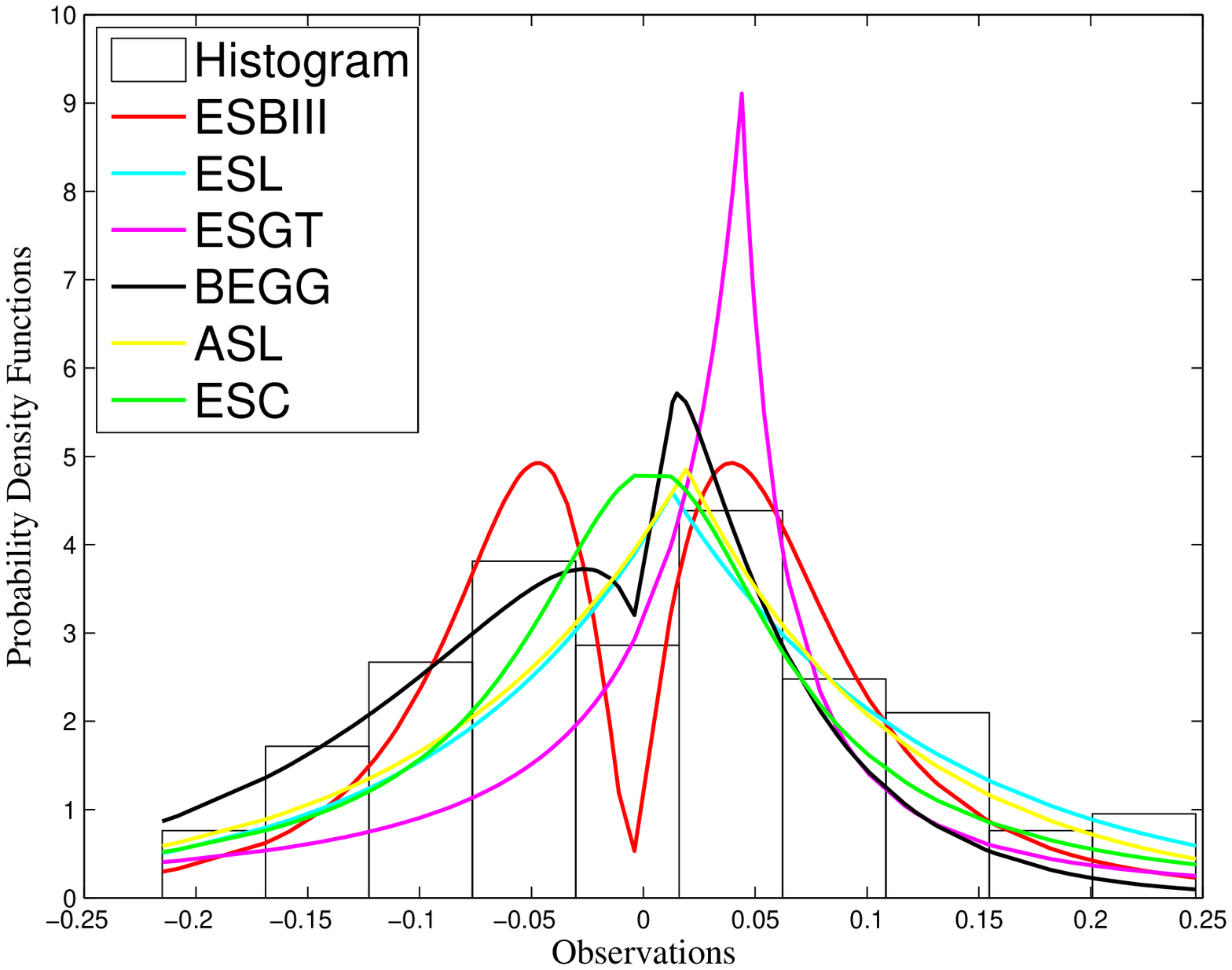}
 \caption{Histogram of the data with the fitted PDF functions for microarray data}
  \end{subfigure}%
 \begin{subfigure}{.55\linewidth}
    \includegraphics[width=.85\textwidth]{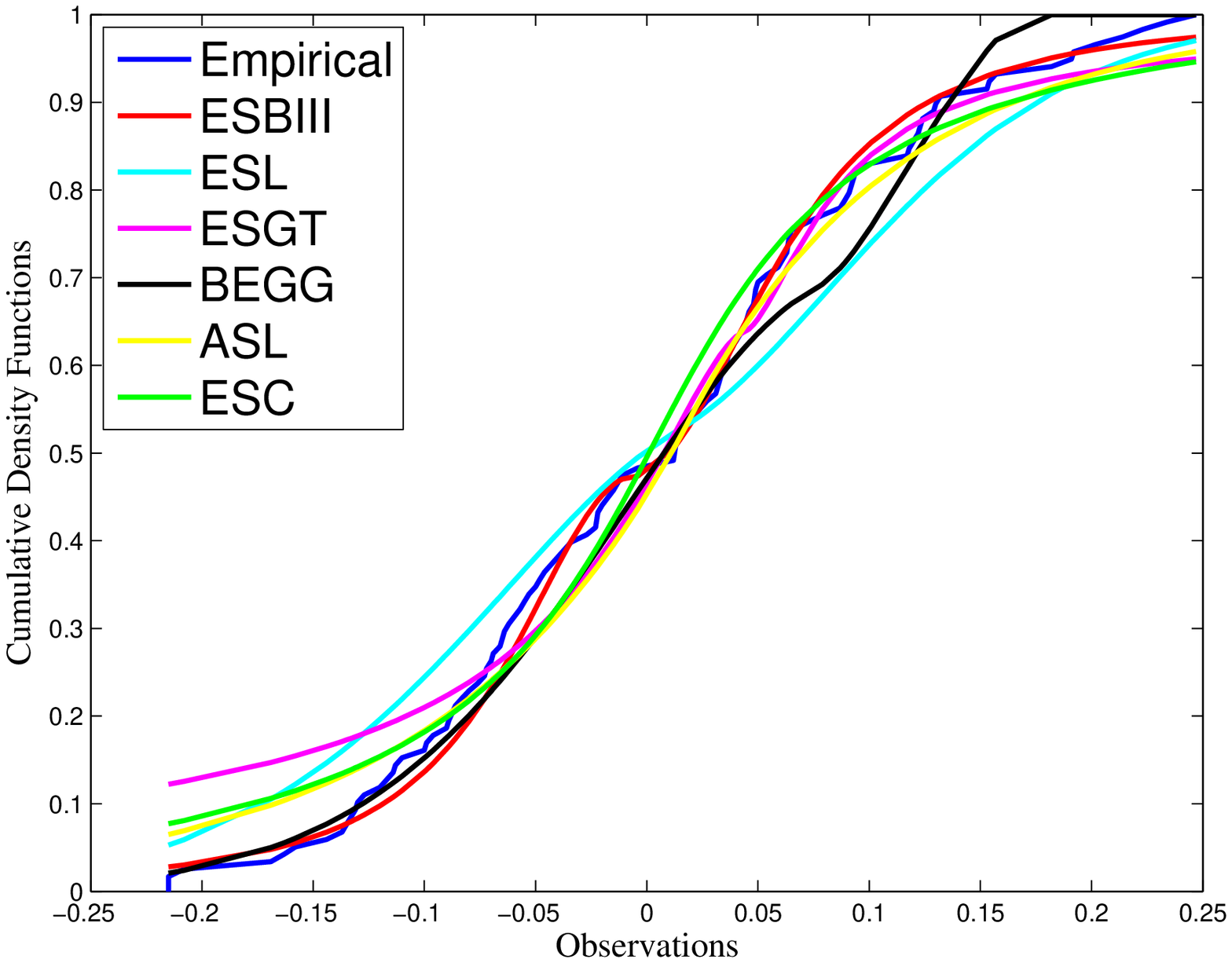}
 \caption{Empirical CDF of the data with the fitted CDF functions for microarray data}
  \end{subfigure}
\end{figure}

\textbf{Example 2}: Martin Marietta data set is very popular in the literature. In this work, we will focus on fitting the explanatory
variable in the regression model. This variable is defined to be the excess rate of the Martin Marietta company.

\begin{tiny}
\begin{table}[htb]\label{esepesb3exMmnc}
\begin{center}
\caption{Parameter estimates for the Martin Marietta data set for the different parametric models}
\begin{tabular}{c c c c c c c}
\hline Parametric Models &$\hat{\mu}$ & $\hat{\sigma}$  & $\hat{\varepsilon}$ & $P(KS)$ & $AIC$  \\ [0.5ex]  
\hline \hline 
ESBIII ($\hat{c}=  2.1241, \hat{k}=  0.7190$)  &  -0.0005 &  0.0652 & 0.0564  & 0.6136 & -242.7349 \\
ESL ($\alpha=1$) & -0.0050 & 0.0652 & -0.1434 & 0.1529 & -195.2337 \\
ESGT($p=1.2,q=1/4$)  & -0.0096 & 0.0428 &  0.0699 &  0.4682 &   -32.3817 \\
\begin{tiny} BEGG($\hat{\alpha}=1.0824,\hat{\beta}=1.02,\hat{\delta_0}=0.2528,\hat{\delta_1}=0.0090,\hat{\eta}=0.7105$) \end{tiny} & 0.0201 & 0.0615 & 0.0301 & 0.1111 & -11.5659 \\
ASL($\hat{\lambda}=0.0001)$ & 0.0188 & 0.1018      &- & 0.5321  &  -90.4340  \\
ESC($\hat{\lambda}=2.5921, \hat{\beta}=0.9860) $ & 0.0139 & 0.1534   &- & 0.4283 &  -88.4240 \\
 \hline
\end{tabular}
\label{sheeptable}
\end{center}
\end{table}\end{tiny}

\begin{figure}[!htb]
\centering
 \begin{subfigure}{.55\linewidth}
    \includegraphics[width=.85\textwidth]{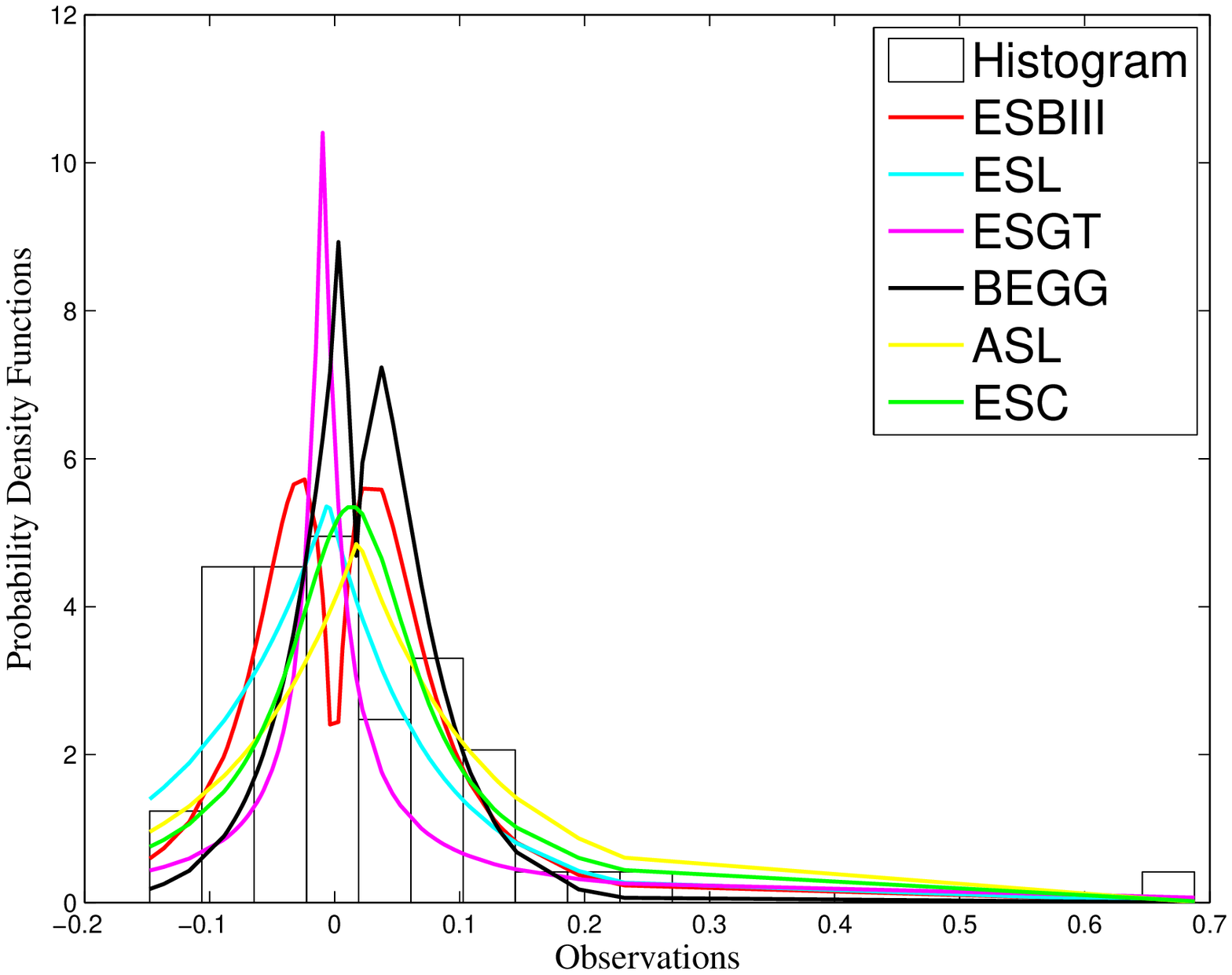}
 \caption{Histogram of the data with the fitted PDF functions for Martin Marietta data}
  \end{subfigure}%
 \begin{subfigure}{.55\linewidth}
    \includegraphics[width=.85\textwidth]{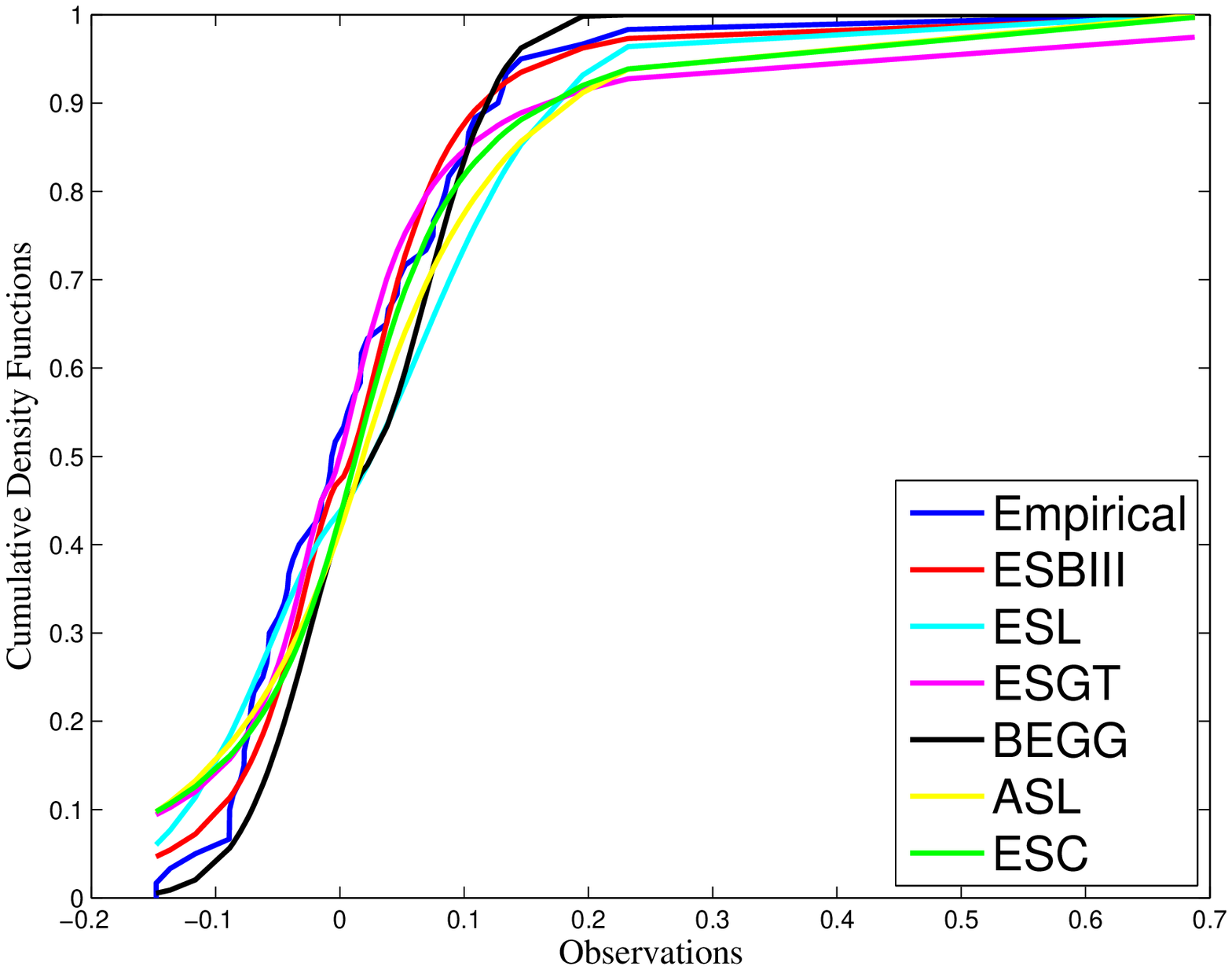}
 \caption{Empirical CDF of the data with the fitted CDF functions for Martin Marietta data}
  \end{subfigure}
\end{figure}

Tables 2  and 3 show that ESBIII distribution fits better than the $\varepsilon$-skew candidate ones, such as ESL, ESGT, BEGG, ASL and ESC distributions.
It is considered that the parameters $c$ and $k$ produce more flexibility when compared with ESL, ESGT, BEGG, ASL and ESC especially.
The parameters $c$ and $k$ of ESBIII produce the flexible behaviour for the peakedness and tail thickness both together.
In addition to this property, the interspace between peakedness and tail of ESBIII can also be flexible,
because we have two shape parameters $c$ and $k$. This property can be considered as an advantage for modelling capacity of the distribution. \\

Tables 2 and 3 show the estimated values of parameters in the used models, $P(KS)$ that is the probability value (p-value) of KS statistic and
Akaike information criterion (AIC) values. It is known that the more p-value of KS statistic is highest, the more modelling capacity is best.
When the parametric models are ordered according to the modelling capacity from highest to lowest one,
ESBIII, ASL, ESC, BEGG, ESL and ESGT distributions are given in order for microarray data.
ESBIII distribution has the high p-value of KS test. Why it can model the data very efficiently is that
the function can capture the frequency as a flexible way. It can be considered that the parameters $c$ and $k$ keep their property that is shapeness.

When the parametric models are ordered according to the modelling capacity from highest to lowest one,
ESBIII, ASL, ESGT, ESC, ESL and BEGG distributions are given in order for Martin Marietta data.
Since ESBIII is in the class of the heavy-tailed distributions, it can model the data in tail.
The bimodality property of ESBIII distribution plays an important role in fitting the data set as well.
Same interpretation on modelling capacity can not be said for the other distributions.
As it is seen, the bimodality and heavy-tailedness are advantage for fitting.

\cite{Lucas97} and \cite{ArslanGenc09} prefer to use the fixed values of parameters $p$ and $q$ as a tuning constant in robustness,
because the ML estimators of the parameters are not robust owing to fact that the IF of them is unbounded. For our case, since the
score function for the parameter $c$ is infinite, IF of ML estimators of parameters will be unbounded. We prefer to estimate it,
because it is clearly seen that the p-value of KS statistic shows the competence of our flexible distribution.
Using both the robustness of ML estimators for the parameters of distribution and the goodness of fit test representing the
modelling capacity of proposed distribution together is suggested. For our real data examples,
examining whether or not the p-value of KS statistic is high or the value of AIC is small will be a good indicator to
approve the modelling capacity of proposed distribution. Since the goodness of fit tests (KS and AIC) gives the satisfied
results, the parameter $c$ was also estimated. In fact, IF of ML estimators for the parameters
$\mu$, $\sigma$, $c$, $k$ and $\varepsilon$ is unbounded, however there is another point it should be taken in account,
which is that the modelling capacity of the proposed model is very high when the p-values of KS statistic are considered.
If the parameter $c$ was considered to be fix, the estimated value for it would be wonderful choice, because ESBIII has a
high p-value of KS statistic, which means that the high KS value or small AIC value show the best fitting performance on data set.
The robustness property of ESBIII also supports the performance of ESBIII.

\section{Conclusions}\label{conclusions}

In this paper, an extension to real axis for the Burr III distribution has been proposed. The main goal at supplying the new distribution is that Burr III distribution has heavy tailed property. An extension to real axis already preserves the heavy tailed (robustness) property of this distribution. A skewness parameter was also added to the extended Burr III. The skewness parameter provides another flexibility when the positive side and negative side of real axis have different frequency. In this context, ESBIII distribution is one in the class of the heavy-tailed candidates and has a skewness parameter as well.

The parameters $c$ and $k$ in Burr III distribution are the shape parameters, which gives the flexibility on function. It is readily seen that some values of them produce unimodal, bimodal, leptokurtic and platikurtic distributions. The skewness and the kurtosis values of ESBIII distribution for the different values of parameters also approve the competency on flexibility of the proposed distribution. As a result, the ESBIII copes with skewness, heavy-tailedness, bimodality by means of just only three parameters.


The some properties of ESBIII were introduced; such as the R\'{e}nyi entropy and the $rth$ moment of X and the characteristic function were calculated.
The ML estimators of the parameters in ESBIII have been obtained.
The local robustness property of ML estimators for the parameters in ESBIII was examined and also ESBIII was examined to see the heavy-tailedness property.
Since we have the ML estimators for the parameters in ESBIII, the real data examples can be applied to see the performance of the distribution.
For this purpose, two real data examples were given. It was observed that the p-values of KS statistic for ESBIII were very high for the considered data sets.
We can assert that having shape parameters $c$ and $k$ which can work both together to construct the peakedness and the tail behaviour of the distribution produces
a flexible distribution. The skewness parameter $\varepsilon$ helps to model the uneqaul frequency in positive and negative part of real axis.
In other words, adding the skewness parameter $\varepsilon$ helps to generate a model which can capture the skewness in data.
For another study, one can make a comparison between the flexibility performance of Azzalini type skewness of the extended Burr III
distribution and $\varepsilon$ skew version proposed in this study. In ongoing paper, Shannon, Tsallis, R\'{e}nyi and hybrid entropies of distributions
\cite{JizbaKorbel16,CankayaKorbel17} will be used as a tool to test how much the degree of heavy-tailedness can be produced by some distributions.

\end{document}